\newtheorem{theorem}{Theorem}
\newtheorem{lemma}{Lemma}
\newtheorem{remark}{Remark}
\begin{document}
\begin{center}
\section*{Interpolation theorem for anisotropic net spaces}
\bf{A.N.Bashirova, A.H.Kalidolday, E.D.Nursultanov}
\end{center} 

{\bf Annotation.} The paper studies the interpolation properties of anisotropic net spaces $N_{\bar{p},\bar{q}}(M)$, where $\bar{p}=(p_1, p_2)$, $\bar{q}=(q_1, q_2)$. 
It is shown that the following equality holds with respect to the multidimensional interpolation method 
$$
(N_{\bar{p}_0,\bar{q}_0}(M), N_{\bar{p}_1,\bar{q}_1}(M))_{\bar{\theta},\bar{q}}=N_{\bar{p},\bar{q}}(M),\;\;\;
\frac{1}{\bar{p}}=\frac{1-\bar{\theta}}{\bar{p}_0}+\frac{\bar{\theta}}{\bar{p}_1}.
$$

{\bf Keywords:} net spaces, anisotropic spaces, real interpolation method.

\section{Introduction}

Let $M$ is the set of all segments from $\mathbb{R}$. For function $f(x)$, defined and integrable on each segment $Q$ from $M$, define the function
$$
\bar{f}(t,M)=\sup_{{}^{Q\in M}_{|Q|>t}}\frac{1}{|Q|}\left|\int_Q f(x)dx\right|,\;\;\;t>0,
$$
where the exact upper edge is taken over all segments $Q\in M$, whose length $|Q|>t$. Function $\bar{f}(t,M)$ is called averaging the function $f$ over the net $M$.

By $N_{p,q}(M)$, $0<p,q\leq\infty$ we denote the set of functions $f$, for which when $q<\infty$
$$
\left\|f\right\|_{N_{p,q}(M)}= \left(\int^{\infty}_{0}\left(t^{\frac{1}{p}}\bar{f}(t,M)\right)^{q}\frac{dt}{t}\right)^\frac{1}{q}<\infty
$$
and when $q=\infty$
$$
\left\|f\right\|_{N_{p,\infty}(M)}=\sup_{t>0}t^\frac{1}{p}\bar{f}(t,M)<\infty.
$$

These spaces are called net spaces, they were introduced and studied in the work \cite{Nursultanov1998Net}. Net spaces are an important research tool in the theory of Fourier series, in operator theory and in other directions. \cite{NED1}-\cite{ARN3}.

In the paper   \cite{Nursultanov2} it was shown that this scale of spaces $N_{p,q}(M)$ is closed with respect to the real interpolation method, that is, at $p_0\neq p_1$ takes place
$$
\left(N_{p_0,q_0}(M), N_{p_1,q_1}(M)\right)_{\theta,q}=N_{p,q}(M).
$$

If in the definition of the space $N_{p,q}(M)$ instead of $\bar{f}(t,M)$ consider the function 
$$
\sup_{{}^{Q\in M}_{|Q|>t}}\frac{1}{|Q|}\int_Q |f(x)|dx,
$$ 
then the corresponding space, as can be seen from the work of \cite{NursultanovBur}, coincides with the Morrey space $M_{p,q}^\alpha$, where $\alpha=\frac{1}{p}-\frac{1}{q}$, but for these spaces it is known that they are not closed with respect to the real interpolation method (see \cite{P1}-\cite{BRV}).

Consider a generalization of the space $N_{p,q}(M)$ in two-dimensional case. 

Let $M$ is set of all rectangles $Q=Q_1\times Q_2$ from $\mathbb{R}^2$, for function $f(x_1,x_2)$ integrable on each set $Q\in M$ define 
$$
\displaystyle\bar{f}(t_1,t_2; M)=\sup_{|Q_i|\geq t_i}\frac{1}{|Q_1||Q_2|}\left|\int_{Q_1}\int_{Q_2}{f(x_1,x_2)dx_1dx_2}\right|,\;\;\;\;t_i>0,
$$
where $|Q_i|$ is the $Q_i$ segment length.

Let $0<\bar{p}=(p_1,p_2)<\infty$, $0<\bar{q}=(q_1,q_2)\leq\infty$. By $N_{\bar{p},\bar{q}}(M)$ denote the set of all functions $f(x_1,x_2)$, for which
$$
\left\|f\right\|_{N_{\bar{p},\bar{q}}(M)}= \left(\int^{\infty}_{0}\left(\int^{\infty}_{0}\left(t_1^{\frac{1}{p_1}}t_2^{\frac{1}{p_2}}\bar{f}(t_1,t_2; M)\right)^{q_1}\frac{dt_1}{t_1}\right)^{\frac{q_2}{q_1}}\frac{dt_2}{t_2}\right)^\frac{1}{q_2}<\infty,
$$
hereinafter, when $q=\infty$, expression $\left(\int^\infty_0\left(\varphi(t)\right)^q\frac{dt}{t}\right)^\frac{1}{q}$ is understood as $\sup_{t>0}\varphi(t)$.

As it can be seen from the definition of the space $N_{\bar{p},\bar{q}}(M)$, this is the space of functions that have different characteristics for each variable. These spaces are called anisotropic net spaces.  

For spaces with mixed metrics, anisotropic spaces, the real interpolation method does not work. For the interpolation of spaces with mixed metric, the interpolation method was introduced by D.L. Fernandez \cite{Fernandez1}-\cite{Fernandez3} and his modification \cite{NED4}, \cite{Nursultanov2}, \cite{NED6}. An interpolation theorem regarding this method for the Lebesgue spaces $L_{\bar{p}}$ with mixed metric was obtained in the work \cite{NED6}: {\it let $0<\bar{p}_i<\infty$ and $p_0^i\neq p_1^i$, $i=0,1$, $0<\bar{q}\leq\infty$, $0<\bar{\theta}<1$, then
$$
(L_{\bar{p}_0}, L_{\bar{p}_1})_{\bar{\theta},\bar{q}}=L_{\bar{p},\bar{q}},\;\;\;\frac{1}{\bar{p}}=\frac{1-\bar{\theta}}{\bar{p}_0}+\frac{\bar{\theta}}{\bar{p}_1},
$$
where $L_{\bar{p},\bar{q}}$ - anisotropic Lorentz space.} (see \cite{Blozinski})

Other applications of this method can be found in the works \cite{Nursultanov5}, \cite{Nursultanov6}. 

The purpose of this paper is to obtain an interpolation theorem for anisotropic net spaces. 

\section{Lemmas}

\begin{lemma}\label{l2}
Let $\varphi(x)$ is locally integrable function, $\displaystyle\mathbb{R}=\bigcup_{k}I_k$ partition of $\mathbb{R}$ into segments of length $|I_k|=\tau>0$, $k\in\mathbb{Z}$, at that $|I_k\cap I_j|=0$, $k\neq j$.
Then for an arbitrary segment $Q$ of length $|Q|\geq\tau$ there are segments $Q'$, $Q''$, $Q'''$ such 
that  $Q'''$ is consists of the union of an integer number of segments of the partition and $\tau\leq|Q'|\leq 2\tau$, $\tau\leq|Q''|\leq2\tau$, $|Q|\leq|Q'''|\leq3|Q|$ and the inequality holds
$$
\left|\int_{Q}\varphi(x)dx\right|\leq\left|\int_{Q'}\varphi(x)dx\right|
+\left|\int_{Q''}\varphi(x)dx\right|+\left|\int_{Q'''}\varphi(x)dx\right|.
$$
\end{lemma}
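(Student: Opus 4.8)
The plan is to reduce the stated integral inequality to a pointwise identity between characteristic functions. After a translation we may assume the partition points are the numbers $k\tau$, $k\in\mathbb Z$; write $Q=[a,b]$. It suffices to find segments $Q',Q'',Q'''$ with the required size properties and signs $\varepsilon_i\in\{-1,+1\}$ for which
$$
\chi_Q=\varepsilon_1\chi_{Q'}+\varepsilon_2\chi_{Q''}+\varepsilon_3\chi_{Q'''}\qquad\text{almost everywhere},
$$
since integrating this against $\varphi$ over $\mathbb R$ and applying the triangle inequality gives the assertion. Let $\alpha_k$ be the largest partition point $\le a$ and $\alpha_{m+1}$ the smallest partition point $\ge b$, so that $\alpha_k\le a<\alpha_{k+1}$ and $\alpha_m<b\le\alpha_{m+1}$ (after a harmless relabelling when $a$ or $b$ is itself a partition point). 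The integer $m-k$ is the number of partition cells met by $Q$, and these inequalities give $(m-k-1)\tau<|Q|\le(m+1-k)\tau$; together with the hypothesis $|Q|\ge\tau$, this is exactly what will keep $|Q'''|$ below $3|Q|$.

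\emph{Main case $|Q|>2\tau$} (so $m-k\ge2$). Put
$$
Q'=[\alpha_{k-1},a],\qquad Q''=[b,\alpha_{m+2}],\qquad Q'''=[\alpha_{k-1},\alpha_{m+2}].
$$
From $\alpha_k\le a<\alpha_{k+1}$ and $\alpha_m<b\le\alpha_{m+1}$ one reads off $\tau\le|Q'|<2\tau$ and $\tau\le|Q''|<2\tau$; moreover $Q'''$ is a union of whole partition cells, $Q\subseteq Q'''$, and cutting $[\alpha_{k-1},\alpha_{m+2}]$ at the points $a$ and $b$ gives $\chi_Q=\chi_{Q'''}-\chi_{Q'}-\chi_{Q''}$. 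Finally $|Q'''|=(m+3-k)\tau$, and $(m+3-k)\tau\le3|Q|$: when $m-k=2$ this reads $5\tau\le3|Q|$, true since $|Q|>2\tau$; when $m-k\ge3$ use $m+3-k\le3(m-k-1)$ together with $|Q|>(m-k-1)\tau$.

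\emph{Case $|Q|\le2\tau$} (so $m-k\le2$). If $m=k$ then $Q=I_k$, and we take $Q'''=Q$, $Q'=Q''=[\alpha_{k-1},\alpha_{k+1}]$. If $m=k+1$, take $Q'=[\alpha_k,b]$, $Q''=[a,\alpha_{k+2}]$, $Q'''=[\alpha_k,\alpha_{k+2}]=I_k\cup I_{k+1}$: here $Q'\cap Q''=Q$ and $Q'\cup Q''=Q'''$, so $\chi_Q=\chi_{Q'}+\chi_{Q''}-\chi_{Q'''}$, while $\tau<|Q'|,|Q''|\le2\tau$ and $|Q|\le2\tau=|Q'''|\le3|Q|$. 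If $m=k+2$ (so $a<\alpha_{k+1}<\alpha_{k+2}<b$), take $Q'=[a,\alpha_{k+2}]$, $Q''=[b,\alpha_{k+4}]$, $Q'''=[\alpha_{k+2},\alpha_{k+4}]=I_{k+2}\cup I_{k+3}$: then $\chi_{Q'}+\chi_{Q'''}=\chi_{[a,\alpha_{k+4}]}=\chi_Q+\chi_{Q''}$ almost everywhere, i.e. $\chi_Q=\chi_{Q'}+\chi_{Q'''}-\chi_{Q''}$, with $\tau\le|Q'|,|Q''|\le2\tau$ and once more $|Q|\le2\tau=|Q'''|\le3|Q|$.

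\emph{The main obstacle.} Requiring the two overhang segments $Q',Q''$ to have length at least $\tau$ pins their outer endpoints to $\alpha_{k-1}$ and $\alpha_{m+2}$, so the sandwich $Q'''=[\alpha_{k-1},\alpha_{m+2}]$ is essentially forced; but then $|Q'''|$ can be as large as roughly $|Q|+4\tau$, and the bound $|Q'''|\le3|Q|$ genuinely fails for short $Q$. Repairing this in the short-span cases $|Q|\le2\tau$ is the only real difficulty, which is precisely what the three special constructions above accomplish; the verification of the characteristic-function identities and of the length inequalities is otherwise routine.
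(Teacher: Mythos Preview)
Your proof is correct and follows the same overhang idea as the paper: take $Q'''$ to be a union of whole partition cells and let $Q',Q''$ be the two leftover pieces, so that $\chi_Q$ is a signed sum of $\chi_{Q'},\chi_{Q''},\chi_{Q'''}$. The paper does this with a single uniform construction, setting $Q'''=\bigcup_{k=k'-1}^{k''+1}I_k$ (one extra cell on each side of the block of cells meeting $Q$) and $Q'=(I_{k'-1}\cup I_{k'})\setminus Q$, $Q''=(I_{k''}\cup I_{k''+1})\setminus Q$, and then simply asserts $|Q'''|\le 3|Q|$. You correctly noticed that this last bound can fail for that construction when $|Q|$ is close to $\tau$: for instance if $|Q|$ is just above $\tau$ but straddles three cells, the paper's $Q'''$ has length $5\tau$. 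Your case split for $|Q|\le 2\tau$, with tailored choices of $Q',Q'',Q'''$ in the sub-cases $m-k=0,1,2$, is exactly what is needed to secure the constant $3$ as stated. So your argument is the paper's argument with the short-interval edge cases actually carried out; for the downstream applications only some finite constant matters, so the paper's oversight is harmless there, but your version proves the lemma as written.
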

\begin{proof}
Let the function $\varphi$ satisfies the conditions of the lemma, $Q\subset\mathbb{R}$ is segment and $|Q|\geq\tau$. Let 
$$
\bigcup_{I_k\cap Q\neq\oslash}I_k=\bigcup_{k=k'}^{k''}I_k.
$$
Let's take
$$
Q'''=\bigcup_{k=k'-1}^{k''+1}I_k,
$$
$$
Q'=\left(I_{k'-1}\cup I_{k'}\right)\setminus Q,
$$ 
$$
Q''=\left(I_{k''}\cup I_{k''+1}\right)\setminus Q.
$$

Then we have $\tau\leq|Q'|\leq 2\tau$, $\tau\leq|Q''|\leq2\tau$, $|Q|\leq|Q'''|\leq3|Q|$ and
\begin{equation*}
\begin{split}
\left|\int_{Q}\varphi(x)dx\right|=\left|\int_{Q'''}\varphi(x)dx-\int_{Q'}\varphi(x)dx-\int_{Q''}\varphi(x)dx\right|\leq \\
\leq\left|\int_{Q'''}\varphi(x)dx\right|+\left|\int_{Q'}\varphi(x)dx\right|+\left|\int_{Q''}\varphi(x)dx\right|.
\end{split}
\end{equation*}

\end{proof}

\begin{lemma}\label{l1}
Let $\displaystyle\mathbb{R}=\bigcup_{k}I_k$ partition of $\mathbb{R}$ 
into segments of length $|I_k|=\tau>0$, $k\in\mathbb{Z}$, at that $|I_k\cap I_j|=0$, $k\neq j$. Let $\varphi$be a function such that
\begin{equation}\label{fi}
\int_{I_k}\varphi(x)dx=0, \;\;\; k\in\mathbb{Z},
\end{equation} 
then for an arbitrary segment $Q$ of length $|Q|\geq\tau$ there are segments $Q'$ and $Q''$ such that, 
 $\tau\leq|Q'|\leq2\tau$, $\tau\leq|Q''|\leq 2\tau$ and
$$
\left|\int_{Q}\varphi(x)dx\right|\leq\left|\int_{Q'}\varphi(x)dx\right|
+\left|\int_{Q''}\varphi(x)dx\right|.
$$
\end{lemma}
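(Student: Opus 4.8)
The plan is to deduce this lemma directly from Lemma~\ref{l2}, using the vanishing condition \eqref{fi} to kill the extra term involving $Q'''$.

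First I would apply Lemma~\ref{l2} to the function $\varphi$ and to the given segment $Q$ with $|Q|\geq\tau$ (note that $\varphi$ is locally integrable by assumption, so the hypotheses of Lemma~\ref{l2} are met). This produces segments $Q'$, $Q''$, $Q'''$ with $\tau\leq|Q'|\leq2\tau$, $\tau\leq|Q''|\leq2\tau$, $|Q|\leq|Q'''|\leq3|Q|$, such that
$$
\left|\int_{Q}\varphi(x)\,dx\right|\leq\left|\int_{Q'}\varphi(x)\,dx\right|+\left|\int_{Q''}\varphi(x)\,dx\right|+\left|\int_{Q'''}\varphi(x)\,dx\right|,
$$
and — this is the feature of Lemma~\ref{l2} that is crucial here — $Q'''$ is a union of an integer number of cells of the partition, say $Q'''=\bigcup_{k=m}^{n}I_k$.

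Next I would observe that, by the hypothesis \eqref{fi},
$$
\int_{Q'''}\varphi(x)\,dx=\sum_{k=m}^{n}\int_{I_k}\varphi(x)\,dx=0,
$$
so the third summand drops out, and the asserted inequality with the stated bounds $\tau\leq|Q'|\leq2\tau$, $\tau\leq|Q''|\leq2\tau$ follows immediately.

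There is essentially no serious obstacle: the entire combinatorial content has already been isolated in Lemma~\ref{l2}. The only point worth emphasizing is that one must invoke precisely the version of the decomposition in which $Q'''$ is assembled from whole partition cells (rather than an arbitrary interval of comparable length), since that is exactly what allows the cancellation coming from \eqref{fi} to be applied term by term.
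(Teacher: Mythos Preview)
Your argument is correct and matches the paper's own proof exactly: the paper simply states that the result follows immediately from Lemma~\ref{l2} together with \eqref{fi}, which is precisely the deduction you spell out. The key point you highlight --- that $Q'''$ is a union of whole partition cells so that \eqref{fi} forces $\int_{Q'''}\varphi=0$ --- is exactly the intended mechanism.
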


The proof follows immediately from the lemma \ref{l2} and the ratio \eqref{fi}.

Let $\tau=(\tau_1,\tau_2)\in(0,+\infty)^2$, $I_k^1=[0,\tau_1]+k\tau_1$, $k\in \mathbb{Z}$, $I_m^2=[0,\tau_2]+m\tau_2$, $m\in\mathbb{Z}$. The system sets $G_\tau=\left\{I_{km}=I_k\times I_m\right\}_{(k,m)\in\mathbb{Z}^2}$ gives a partition of $\mathbb{R}^2$ into rectangles, i.e. $\displaystyle\mathbb{R}^2=\bigcup_{k,m\in \mathbb{Z}^2} I_{k m}$. 

For a locally integrable function $f(x_1,x_2)$ and the set $G_\tau$ we define the functions $f_{00}(x_1,x_2)$, $f_{01}(x_1,x_2)$, $f_{10}(x_1,x_2)$, $f_{11}(x_1,x_2)$ in the following way: 
\begin{equation}\label{f01}
f_{01}(x_1,x_2)=\frac{1}{|I_m^2|}\int_{I_m^2}f({x_1},{x_2}'){dx_2}'-\frac{1}{|I_k^1||I_m^2|}\int_{I_k^1}\int_{I_m^2}f({x_1}',{x_2}'){dx_1}'{dx_2}', \;\;\;\; 
(x_1,x_2)\in{I_k^1}\times{I_m^2},
\end{equation}
\begin{equation}\label{f10}
f_{10}(x_1,x_2)=\frac{1}{|I_k^1|}\int_{I_k^1}f({x_1}',{x_2}){dx_1}'-\frac{1}{|I_k^1||I_m^2|}\int_{I_k^1}\int_{I_m^2}f({x_1}',{x_2}'){dx_1}'{dx_2}', \;\;\;\; 
(x_1,x_2)\in{I_k^1}\times{I_m^2},
\end{equation}
\begin{equation}\label{f11}
f_{11}(x_1,x_2)=\frac1{|I_k^1||I_m^2|}\int_{I_k^1}\int_{I_m^2}f(x_1',x_2')dx_1'dx_2',\;\;\;\;
(x_1,x_2)\in I_k^1\times I_m^2,
\end{equation}
\begin{equation}\label{f00}
f_{00} = f - f_{01} - f_{10} - f_{11},
\end{equation}
i.e. 
$$
f = f_{00} + f_{10} + f_{01} + f_{11}.
$$

These functions will be called the decomposition of the function $f(x_1,x_2)$, the corresponding partition $G_\tau$.

\begin{lemma}\label{l3}
Let $G_\tau$ is partitioning $\mathbb{R}^2$ into rectangles, $f(x_1,x_2)$ is locally integrable on $\mathbb{R}^2$. $f = f_{00} + f_{10} + f_{01} + f_{11}$ is the decomposition corresponding to the partition  $G_\tau$. Then
$$
\int_{I_k^1}f_{00}(x_1,x_2)dx_1=\int_{I_k^1}f_{01}(x_1,x_2)dx_1=0, \;\;\;\; k\in\mathbb{Z},  x_2\in \mathbb{R}
$$
$$
\int_{I_m^2}f_{00}(x_1,x_2)dx_2=\int_{I_m^2}f_{10}(x_1,x_2)dx_2=0,\;\;\;\;  m\in\mathbb{Z},   x_1\in \mathbb{R}
$$
\end{lemma}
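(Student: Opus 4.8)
The plan is to verify each of the four stated identities directly from the defining formulas (\ref{f01})--(\ref{f00}), since each $f_{ij}$ is given by an explicit averaging expression on each rectangle $I_k^1\times I_m^2$. The key observation is that all four claimed equalities are ``one-variable'' statements: they assert that integrating a particular component in one variable over one cell of the partition kills it, with the other variable held fixed. So it suffices to fix the free variable in a single cell $I_k^1\times I_m^2$ and compute.

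First I would handle $\int_{I_k^1} f_{01}\,dx_1 = 0$. Fix $x_2\in I_m^2$. By (\ref{f01}), on $I_k^1\times I_m^2$ the function $f_{01}(x_1,x_2)$ equals $g(x_1) - c$, where $g(x_1)=\frac{1}{|I_m^2|}\int_{I_m^2} f(x_1,x_2')\,dx_2'$ and $c=\frac{1}{|I_k^1||I_m^2|}\int_{I_k^1}\int_{I_m^2} f(x_1',x_2')\,dx_1'\,dx_2'$. Integrating in $x_1$ over $I_k^1$ gives $\int_{I_k^1} g(x_1)\,dx_1 = |I_k^1|\,c$ (by Fubini, swapping the $x_1$ and $x_2'$ integrals), hence $\int_{I_k^1} f_{01}\,dx_1 = |I_k^1|c - |I_k^1|c = 0$. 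The identity $\int_{I_m^2} f_{10}\,dx_2 = 0$ is proved the same way with the roles of the variables swapped, using (\ref{f10}).

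Next I would treat the components of $f_{00}$. Using (\ref{f00}), $f_{00} = f - f_{01} - f_{10} - f_{11}$, and $f_{11}$ is constant on $I_k^1\times I_m^2$ by (\ref{f11}). For $\int_{I_k^1} f_{00}\,dx_1$ with $x_2\in I_m^2$ fixed: we have $\int_{I_k^1} f(x_1,x_2)\,dx_1 = |I_k^1|\cdot\frac{1}{|I_k^1|}\int_{I_k^1} f(x_1,x_2)\,dx_1$, which is exactly $|I_k^1|$ times the first term appearing in the definition (\ref{f10}) of $f_{10}$; also $\int_{I_k^1} f_{11}\,dx_1 = |I_k^1|c$ where $c$ is the double average, which matches the second term of (\ref{f10}); and $\int_{I_k^1} f_{01}\,dx_1 = 0$ by the step just completed. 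Adding up, $\int_{I_k^1} f_{00}\,dx_1 = \int_{I_k^1} f\,dx_1 - 0 - \int_{I_k^1} f_{10}\,dx_1 - \int_{I_k^1} f_{11}\,dx_1 = 0$, since by construction $\int_{I_k^1}(f_{10}+f_{11})\,dx_1 = \int_{I_k^1} f\,dx_1$. The identity $\int_{I_m^2} f_{00}\,dx_2 = 0$ follows symmetrically.

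There is essentially no obstacle here; the only thing to be careful about is the bookkeeping with Fubini's theorem (justified by local integrability of $f$ on the bounded rectangle $I_k^1\times I_m^2$) and making sure the constant terms in (\ref{f01}), (\ref{f10}), (\ref{f11}) are recognized as the same double average $c$. The cleanest write-up is to prove the two ``off-diagonal'' identities ($f_{01}$, $f_{10}$) first, then obtain the two $f_{00}$ identities as immediate consequences of (\ref{f00}) together with the fact that $f_{10}+f_{11}$ reproduces the $x_1$-average of $f$ on each column and $f_{01}+f_{11}$ reproduces the $x_2$-average of $f$ on each row.
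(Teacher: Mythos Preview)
Your proposal is correct and is exactly the approach the paper takes: the paper simply states that the proof ``immediately follows from the definitions of the functions $f_{00}$, $f_{10}$, $f_{01}$,'' and your write-up just makes this explicit by carrying out the direct verification on a fixed cell $I_k^1\times I_m^2$.
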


The proof immediately follows from the definitions of the functions $f_{00}$, $f_{10}$, $f_{01}$.

\begin{lemma}\label{lf00}
Let $G_\tau$ is partitioning $\mathbb{R}^2$ into rectangles, $f(x_1,x_2)$ is locally integrable on $\mathbb{R}^2$ and function $f_{00}$ defined by equality \eqref{f00}. 
Then
\begin{equation}\label{lf00_1}
\bar{f}_{00}(t_1,t_2; M)\leq\left\lbrace
\begin{aligned}
    & 64\frac{\tau_1}{t_1}\cdot\frac{\tau_2}{t_2}\bar{f}\left(\tau_1,\tau_2; M\right),\;\;\;  t_1>\tau_1, t_2>\tau_2  \\
    & 64\frac{\tau_1}{t_1}\bar{f}\left(\tau_1,t_2; M\right),\;\;\;\;\;\; t_1>\tau_1, t_2\leq\tau_2  \\
    & 64\frac{\tau_2}{t_2}\bar{f}\left(t_1,\tau_2; M\right),\;\;\;\;\;\; t_1\leq\tau_1, t_2>\tau_2  \\
    & 64\bar{f}(t_1,t_2;M),\;\;\;\;\;\;\;\;\;\;\;\;\; t_1\leq\tau_1, t_2\leq\tau_2\\
\end{aligned}
\right.
\end{equation}
\end{lemma}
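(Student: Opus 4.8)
The plan is to reduce everything to the algebraic identity obtained by substituting \eqref{f01} and \eqref{f10} into \eqref{f00}, namely
$$
f_{00}=f-g^{(1)}-g^{(2)}+f_{11},
$$
where, for $(x_1,x_2)\in I_k^1\times I_m^2$, we set $g^{(1)}(x_1,x_2)=\frac1{|I_k^1|}\int_{I_k^1}f(x_1',x_2)\,dx_1'$ and $g^{(2)}(x_1,x_2)=\frac1{|I_m^2|}\int_{I_m^2}f(x_1,x_2')\,dx_2'$, and to combine it with the cancellation supplied by Lemma \ref{l3} and the segment reduction of Lemma \ref{l1}. Fix a rectangle $Q=Q_1\times Q_2$ with $|Q_i|\geq t_i$; the argument splits into the four regimes of \eqref{lf00_1} according to whether $t_i>\tau_i$ or $t_i\leq\tau_i$, but the mechanism is the same in each.

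First I reduce the domain of integration in every variable for which $t_i>\tau_i$, so that $|Q_i|\geq t_i>\tau_i$. For instance, if $t_1>\tau_1$, put $F(x_1)=\int_{Q_2}f_{00}(x_1,x_2)\,dx_2$; by Fubini and Lemma \ref{l3} we have $\int_{I_k^1}F(x_1)\,dx_1=0$ for every $k$, so Lemma \ref{l1} produces segments $Q_1',Q_1''$ with $\tau_1\leq|Q_1'|,|Q_1''|\leq2\tau_1$ and
$$
\left|\int_{Q_1}\!\int_{Q_2}f_{00}\right|\leq\left|\int_{Q_1'}\!\int_{Q_2}f_{00}\right|+\left|\int_{Q_1''}\!\int_{Q_2}f_{00}\right|.
$$
The essential point, read off from the proof of Lemma \ref{l2}, is that $Q_1',Q_1''$ depend only on $Q_1$ and the grid $G_\tau$ and not on the integrand; so if moreover $t_2>\tau_2$ I may apply the same step in the variable $x_2$ to each of the two resulting terms (now using $\int_{I_m^2}f_{00}(x_1,x_2)\,dx_2=0$ from Lemma \ref{l3}), ending up with at most four terms $\big|\int_A\!\int_B f_{00}\big|$ in which $A$ is either $Q_1$ itself (when $t_1\leq\tau_1$) or a segment of length in $[\tau_1,2\tau_1]$, and similarly for $B$ in the second variable.

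It then remains to bound a single such term. I expand $f_{00}=f-g^{(1)}-g^{(2)}+f_{11}$ and estimate the four pieces. The contribution of $f$ satisfies $\big|\int_A\!\int_B f\big|\leq|A||B|\,\bar f(s_1,s_2;M)$ whenever $|A|\geq s_1$ and $|B|\geq s_2$, directly from the definition of $\bar f$. Since $f_{11}$ is constant on each cell $I_k^1\times I_m^2$, equal there to an average of $f$ over that cell whose modulus is $\leq\bar f(\tau_1,\tau_2;M)$, summing over all cells meeting $A\times B$ gives $\big|\int_A\!\int_B f_{11}\big|\leq\bar f(\tau_1,\tau_2;M)\sum_{k,m}|A\cap I_k^1||B\cap I_m^2|=|A||B|\,\bar f(\tau_1,\tau_2;M)$. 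Since $g^{(2)}$ is constant in $x_2$ on each $I_m^2$, one has $\int_A\!\int_B g^{(2)}=\sum_m\frac{|B\cap I_m^2|}{|I_m^2|}\int_A\!\int_{I_m^2}f$, and bounding $\big|\int_A\!\int_{I_m^2}f\big|\leq|A|\,|I_m^2|\,\bar f$ at the appropriate scales makes the sum telescope back to $|A||B|$ times the same value of $\bar f$; $g^{(1)}$ is handled symmetrically. Using that $\bar f(s_1,s_2;M)$ is non-increasing in each of $s_1,s_2$, so that $\tau_i$ may be replaced by $t_i$ in the regimes where $t_i\leq\tau_i$, each of the four pieces is $\leq|A||B|$ times precisely the value of $\bar f$ occurring on the right-hand side of \eqref{lf00_1} in the relevant regime.

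Finally I gather the constants. In the regime $t_1>\tau_1$, $t_2>\tau_2$ there are four terms, each a sum of four pieces, and since $|A|\leq2\tau_1$, $|B|\leq2\tau_2$ each piece is $\leq|A||B|\,\bar f(\tau_1,\tau_2;M)\leq4\tau_1\tau_2\,\bar f(\tau_1,\tau_2;M)$; hence $\big|\int_{Q_1}\!\int_{Q_2}f_{00}\big|\leq64\,\tau_1\tau_2\,\bar f(\tau_1,\tau_2;M)$, and dividing by $|Q_1||Q_2|\geq t_1t_2$ gives the first line of \eqref{lf00_1}. In a mixed regime, say $t_1>\tau_1$, $t_2\leq\tau_2$, there are only two terms (with $B=Q_2$), yielding $\big|\int_{Q_1}\!\int_{Q_2}f_{00}\big|\leq16\,\tau_1|Q_2|\,\bar f(\tau_1,t_2;M)$ and thus, after dividing by $|Q_1||Q_2|$ and using $|Q_1|\geq t_1$, the bound $64\frac{\tau_1}{t_1}\bar f(\tau_1,t_2;M)$; the symmetric regime is identical, and in the regime $t_1\leq\tau_1$, $t_2\leq\tau_2$ there is a single term giving $4\,|Q_1||Q_2|\,\bar f(t_1,t_2;M)$, hence $64\,\bar f(t_1,t_2;M)$. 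I expect the only genuinely subtle step to be the iteration of Lemma \ref{l1} across the two variables: this is justified by the integrand-independence of the auxiliary segments noted above, together with the verification, via Lemma \ref{l3} and Fubini, that the relevant partially integrated functions have zero integral over every grid segment. The remaining work is the routine bookkeeping indicated here.
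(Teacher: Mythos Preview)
Your proof is correct and follows essentially the same route as the paper. Both arguments expand $f_{00}=f-g^{(1)}-g^{(2)}+f_{11}$, bound the four pieces termwise by values of $\bar f$ using monotonicity, and invoke Lemma~\ref{l3} together with Lemma~\ref{l1} to reduce any $Q_i$ with $|Q_i|>\tau_i$ to segments of length in $[\tau_i,2\tau_i]$. The only organizational difference is that the paper first records a pointwise inequality at each fixed scale $(s_1,s_2)=(|Q_1|,|Q_2|)$ (their display \eqref{lf00_2}) and then takes the supremum over $s_i\geq t_i$, whereas you argue directly in the $(t_1,t_2)$ regimes; your observation that the auxiliary segments in Lemma~\ref{l1} depend only on $Q_i$ and the grid (not on the integrand) is exactly what makes the two-variable iteration legitimate, and the paper uses this implicitly. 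Your bookkeeping in fact yields the sharper constants $4,16,16$ in the last three regimes before you relax them to $64$; the paper obtains the same sharper constants in \eqref{lf00_2} but genuinely needs $64$ in \eqref{lf00_1} because its supremum step in, say, the regime $t_1>\tau_1$, $t_2\leq\tau_2$ must also cover the subcase $s_2>\tau_2$, which it controls via the first line of \eqref{lf00_2}.
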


\begin{proof} 
Let $Q=Q_1\times Q_2 \in M$, $|Q_1|=s_1$, $|Q_2|=s_2$. We prove the following inequality
\begin{equation}\label{lf00_2}
\frac{1}{|Q_1||Q_2|}\left|\int_{Q_2}\int_{Q_1}f_{00}(x_1,x_2)dx_1dx_2 \right|\leq\left\lbrace
\begin{aligned}
    & 64\frac{\tau_1}{s_1}\cdot\frac{\tau_2}{s_2}\bar{f}\left(\tau_1,\tau_2; M\right),\;\;\;  s_1>\tau_1, s_2>\tau_2  \\
    & 16\frac{\tau_1}{s_1}\bar{f}\left(\tau_1,s_2; M\right),\;\;\;\;\;\; s_1>\tau_1, s_2\leq\tau_2  \\
    & 16\frac{\tau_2}{s_2}\bar{f}\left(s_1,\tau_2; M\right),\;\;\;\;\;\; s_1\leq\tau_1, s_2>\tau_2  \\
    & 4\bar{f}(s_1,s_2;M),\;\;\;\;\;\;\;\;\;\;\;\;\; s_1\leq\tau_1, s_2\leq\tau_2\\
\end{aligned}
\right.
\end{equation}

Consider the case $s_1\leq\tau_1$, $s_2\leq\tau_2$. Using the definition of the function ${f}_{00}$, we get

\begin{equation}\label{prf00}
\begin{split}
&\frac{1}{|Q_1||Q_2|}\left|\int_{Q_2}\int_{Q_1}f_{00}(x_1,x_2)dx_1dx_2 \right|\leq\frac{1}{|Q_1||Q_2|}\left|\int_{Q_2}\int_{Q_1}f(x_1,x_2)dx_1dx_2\right|+\\
&+\left|\frac{1}{|Q_1|}\sum_{|{I_m^2}\cap Q_2|>0}\frac{|{I_m^2}\cap Q_2|}{|I_m^2||Q_2|}\int_{Q_1}\int_{I_m^2}f(x_1,x_2)dx_2dx_1\right|+\\
&+\left|\frac{1}{|Q_2|}\sum_{|{I_k^1}\cap Q_1|>0}\frac{|{I_k^1}\cap Q_1|}{|I_k^1||Q_1|}\int_{Q_2}\int_{I_k^1}f(x_1,x_2)dx_1dx_2\right|+\\
&+\left|\sum_{|{I_k^1}\cap Q_1|>0}\sum_{|{I_m^2}\cap Q_2|>0}\frac{|{I_k^1}\cap Q_1||{I_m^2}\cap Q_2|}{|I_k^1||I_m^2||Q_1||Q_2|}\int_{I_k^1}\int_{I_m^2}f(x_1,x_2)dx_2dx_1\right|.
\end{split}
\end{equation}

Further, we have
$$
\frac{1}{|Q_1||Q_2|}\left|\int_{Q_2}\int_{Q_1}f_{00}(x_1,x_2)dx_1dx_2 \right|\leq\frac{1}{|Q_1||Q_2|}\left|\int_{Q_2}\int_{Q_1}f(x_1,x_2)dx_1dx_2\right|+
$$
$$
+\sup_{|e_2|\geq\tau_2,e_2\in W}\frac{1}{|Q_1||e_2|}\left|\int_{e_2}\int_{Q_1}f(x_1,x_2)dx_1dx_2\right|\frac{1}{|Q_2|}\sum_{|{I_m^2}\cap Q_2|>0}|{I_m^2}\cap Q_2|+
$$
$$
+\sup_{|e_1|\geq\tau_1,e_1\in W}\frac{1}{|Q_2||e_1|}\left|\int_{Q_2}\int_{e_1}f(x_1,x_2)dx_1dx_2\right|\frac{1}{|Q_1|}\sum_{|{I_k^1}\cap Q_1|>0}|{I_k^1}\cap Q_1|+
$$
$$
+\bar{f}(\tau_1,\tau_2)\frac{1}{|Q_1||Q_2|}\sum_{|{I_k^1}\cap Q_1|>0}\sum_{|{I_m^2}\cap Q_2|>0}|{I_k^1}\cap Q_1||{I_m^2}\cap Q_2|,
$$
here $W$ - set of segments in $\mathbb{R}$.
Thus,
\begin{equation}\label{ocf00}
\begin{split}
&\frac{1}{|Q_1||Q_2|}\left|\int_{Q_2}\int_{Q_1}f_{00}(x_1,x_2)dx_1dx_2 \right|\leq\\
&\leq\bar{f}(s_1,s_2; M)+\bar{f}(s_1,\tau_2; M)+\bar{f}(\tau_1,s_2; M)+\bar{f}(\tau_1,\tau_2; M)\leq4\bar{f}(s_1,s_2; M).
\end{split}
\end{equation}

Consider the case when $|Q_1|=s_1>\tau_1$, $|Q_2|=s_2\leq\tau_2$. Taking into account the lemma  \ref{l3}, note that the function $\varphi(x)=\int_{Q_2}f_{00}(x_1,x_2)dx_2$ satisfies Lemma \ref{l1},  therefore there are segments $Q'$ and $Q''$ such that $\tau\leq|Q'|\leq2\tau$, $\tau\leq|Q''|\leq 2\tau$ and
\begin{equation*}
\begin{split}
&\frac{1}{|Q_1||Q_2|}\left|\int_{Q_2}\int_{Q_1}f_{00}(x_1,x_2)dx_1dx_2\right|\leq\frac{1}{|Q_1||Q_2|}\left(\left|\int_{Q_2}\int_{Q_1'} f_{00}(x_1,x_2)dx_1dx_2\right|+\right.\\
&\left.+\left|\int_{Q_2}\int_{Q_1''} f_{00}(x_1,x_2)dx_1dx_2\right|\right)\leq\frac{2\tau_1}{s_1}\left(\frac{1}{|Q_1'||Q_2|}\left|\int_{Q_2}\int_{Q_1'} f_{00}(x_1,x_2)dx_1dx_2\right|+\right.\\
&\left.+\frac{1}{|Q_1''||Q_2|}\left|\int_{Q_2}\int_{Q_1''} f_{00}(x_1,x_2)dx_1dx_2\right|\right).
\end{split}
\end{equation*}

Then, similarly to what was proved above (see \eqref{ocf00}), we have
\begin{equation*}
\begin{split}
&\frac{1}{|Q_1||Q_2|}\left|\int_{Q_2}\int_{Q_1}f_{00}(x_1,x_2)dx_1dx_2\right|\leq\\
&\leq\frac{2\tau_1}{s_1}\left(4\bar{f}(|{Q_1}'|,s_2; M)+4\bar{f}(|{Q_1}''|, s_2; M)\right)\leq 16\frac{\tau_1}{s_1}\bar{f}\left(\tau_1, s_2; M\right).
\end{split}
\end{equation*}

Similarly, we have in the case $|Q_1|=s_1\leq\tau_1$, $|Q_2|=s_2>\tau_2$.
\begin{equation*}
\frac{1}{|Q_1||Q_2|}\left|\int_{Q_2}\int_{Q_1}f_{00}(x_1,x_2)dx_1dx_2\right|\leq16\frac{\tau_2}{s_2}\bar{f}\left(s_1,\tau_2; M\right).
\end{equation*}

Let $|Q_1|=s_1>\tau_1$, $|Q_2|=s_2>\tau_2$. Applying lemma \ref{l1} and lemma \ref{l3}, we obtain
\begin{equation*}
\begin{split}
&\frac{1}{|Q_1||Q_2|}\left|\int_{Q_2}\int_{Q_1}f_{00}(x_1,x_2)dx_1dx_2\right|\leq\frac{1}{|Q_1||Q_2|}\left(\left|\int_{Q_2'}\int_{Q_1'} f_{00}(x_1,x_2)dx_1dx_2\right|+\right.\\
&\left.+\left|\int_{Q_2''}\int_{Q_1'} f_{00}(x_1,x_2)dx_1dx_2\right|+\left|\int_{Q_2'}\int_{Q_1''} f_{00}(x_1,x_2)dx_1dx_2\right|+\left|\int_{Q_2''}\int_{Q_1''} f_{00}(x_1,x_2)dx_1dx_2\right|\right),
\end{split}
\end{equation*}
where $\tau_i\leq|Q_i'|<2\tau_i$, $\tau_i\leq|Q_i''|<2\tau_i$, $i=1,2$.

Thus, using the estimate  \eqref{ocf00} for each term, we have 
\begin{equation*}
\frac{1}{|Q_1||Q_2|}\left|\int_{Q_2}\int_{Q_1}f_{00}(x_1,x_2)dx_1dx_2\right|\leq64\frac{\tau_1}{s_1}\frac{\tau_2}{s_2}\bar{f}\left(\tau_1,\tau_2; M\right).
\end{equation*}

Recall the definition of averaging the function $f_{00}(x_1,x_2)$ over the net $M$:
\begin{equation*}
{\bar{f}_{00}}(t_1,t_2; M)=\sup_{|Q_i|\geq t_i}\frac{1}{|Q_1||Q_2|}\left|\int_{Q_2}\int_{Q_1}f_{00}(x_1,x_2)dx_1dx_2\right|.
\end{equation*}

Let $t_1>\tau_1$, $t_2>\tau_2$, then considering \eqref{lf00_2}, we get
\begin{equation*}
\begin{split}
{\bar{f}_{00}}(t_1,t_2; M)=\sup_{|Q_i|\geq t_i}\frac{1}{|Q_1||Q_2|}\left|\int_{Q_2}\int_{Q_1}f_{00}(x_1,x_2)dx_1dx_2\right|\leq\\
\leq\sup_{{}^{s_1\geq t_1}_{s_2\geq t_2}}64\frac{\tau_1}{s_1}\frac{\tau_2}{s_2}\bar{f}\left(\tau_1,\tau_2; M\right)\leq64\frac{\tau_1}{t_1}\frac{\tau_2}{t_2}\bar{f}\left(\tau_1,\tau_2; M\right).
\end{split}
\end{equation*}

Let $t_1>\tau_1$, $t_2\leq\tau_2$, there are two possible cases: $|Q_1|=s_1>\tau_1$, $|Q_2|=s_2>\tau_2$ and $s_1>\tau_1$, $t_2<s_2\leq\tau_2$.

If $s_1>\tau_1$, $s_2>\tau_2$, then we use the estimate \eqref{lf00_2} and considering that $t_2\leq\tau_2$, we have
$$
\frac{1}{|Q_1||Q_2|}\left|\int_{Q_2}\int_{Q_1}f_{00}(x_1,x_2)dx_1dx_2\right|\leq64\frac{\tau_1}{s_1}\frac{\tau_2}{s_2}\bar{f}\left(\tau_1,\tau_2; M\right)\leq64\frac{\tau_1}{t_1}\bar{f}\left(\tau_1,t_2; M\right).
$$

If $s_1>\tau_1$, $s_2<\tau_2$, then 
$$
\frac{1}{|Q_1||Q_2|}\left|\int_{Q_2}\int_{Q_1}f_{00}(x_1,x_2)dx_1dx_2\right|\leq16\frac{\tau_1}{s_1}\bar{f}\left(\tau_1,s_2; M\right)\leq16\frac{\tau_1}{t_1}\bar{f}\left(\tau_1,t_2; M\right).
$$

Thus,
\begin{equation*}
{\bar{f}_{00}}(t_1,t_2; M)\leq64\frac{\tau_1}{t_1}\bar{f}\left(\tau_1,t_2; M\right).
\end{equation*}

Similarly we get an estimate
\begin{equation*}
{\bar{f}_{00}}(t_1,t_2; M)\leq64\frac{\tau_2}{t_2}\bar{f}\left(t_1,\tau_2; M\right),
\end{equation*}
at $t_1\leq\tau_1$, $t_2>\tau_2$.

At $t_1\leq\tau_1$, $t_2\leq\tau_2$ 4 cases are possible: $\begin{cases} s_1>\tau_1\\
s_2>\tau_2\end{cases}$, $\begin{cases} s_1>\tau_1\\
s_2<\tau_2\end{cases}$, $\begin{cases} s_1<\tau_1\\
s_2>\tau_2\end{cases}$, $\begin{cases} s_1<\tau_1\\
s_2<\tau_2\end{cases}$.

In the first case, we use the first relation from \eqref{lf00_2}, in the second - the first and second relations from \eqref{lf00_2}, in the third - the first and third, and in the fourth - all relations from \eqref{lf00_2}, then we have
$$
{\bar{f}_{00}}(t_1,t_2; M)\leq64\bar{f}\left(t_1,t_2; M\right).
$$
\end{proof}

\begin{lemma}\label{lf01}
Let $G_\tau$- partitioning $\mathbb{R}^2$ into rectangles, $f(x_1,x_2)$ - locally integrable on $\mathbb{R}^2$ and function $f_{01}$, $f_{10}$ are defined by the equalities \eqref{f01} and \eqref{f10} respectively. Then
\begin{equation}\label{lf01_1}
\bar{f}_{01}(t_1,t_2; M)\leq\left\lbrace
\begin{aligned}
    & 8\frac{\tau_1}{t_1}\left[3\bar{f}\left(\tau_1,t_2; M\right)+4\frac{\tau_2}{t_2}\bar{f}\left(\tau_1,\tau_2; M\right)\right],\;\;\;  t_1>\tau_1, t_2>\tau_2  \\
    & 56\frac{\tau_1}{t_1}\bar{f}\left(\tau_1,\tau_2; M\right),\;\;\;\;\;\;\;\;\;\; t_1>\tau_1, t_2\leq\tau_2  \\
    & 8\left[3\bar{f}\left(t_1,t_2; M\right)+4\frac{\tau_2}{t_2}\bar{f}(t_1,\tau_2; M)\right],\;\;\;\;\;\; t_1\leq\tau_1, t_2>\tau_2  \\
    & 56\bar{f}(t_1,\tau_2; M),\;\;\;\;\;\;\;\;\;\;\;\;\;\;\;\;\;  t_1\leq\tau_1, t_2\leq\tau_2  \\
\end{aligned}
\right.,
\end{equation}
$$
\bar{f}_{10}(t_1,t_2; M)\leq\left\lbrace
\begin{aligned}
    & 8\frac{\tau_2}{t_2}\left[3\bar{f}\left(t_1,\tau_2; M\right)+4\frac{\tau_1}{t_1}\bar{f}\left(\tau_1,\tau_2; M\right)\right],\;\;\;  t_1>\tau_1, t_2>\tau_2  \\
    & 8\left[3\bar{f}\left(t_1,t_2; M\right)+4\frac{\tau_1}{t_1}\bar{f}(\tau_1,t_2; M)\right],\;\;\;\;\;\;\;\;\;\;  t_1>\tau_1, t_2\leq\tau_2  \\
    & 56\frac{\tau_2}{t_2}\bar{f}\left(\tau_1,\tau_2; M\right),\;\;\;\;\;\;  t_1\leq\tau_1, t_2>\tau_2  \\
    & 56\bar{f}(\tau_1,t_2; M),\;\;\;\;\;\;\;\;\;\;\;\;\;\;\;\;\;  t_1\leq\tau_1, t_2\leq\tau_2 \\
\end{aligned}
\right..
$$
\end{lemma}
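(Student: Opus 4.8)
The proof follows the scheme of Lemma~\ref{lf00}: one first establishes a pointwise estimate for $\frac1{|Q_1||Q_2|}\big|\int_{Q_2}\int_{Q_1}f_{01}\big|$ on an arbitrary rectangle $Q=Q_1\times Q_2$ with $|Q_1|=s_1$, $|Q_2|=s_2$, treating separately the four regimes $s_1\gtrless\tau_1$, $s_2\gtrless\tau_2$, and then takes the supremum over $s_i\ge t_i$ to obtain \eqref{lf01_1}. The estimate for $f_{10}$ follows from that for $f_{01}$ by interchanging the two variables, so only $f_{01}$ has to be handled. The structural point to keep in mind is that, by Lemma~\ref{l3}, $f_{01}$ has vanishing integral over every cell $I_k^1$ of the partition in the \emph{first} variable, but in general not over the cells $I_m^2$ in the second; hence the first variable may be contracted by means of the sharp Lemma~\ref{l1}, while in the second variable only the three-term Lemma~\ref{l2} is available.

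The basic building block is the analogue of \eqref{ocf00}. Writing $f_{01}=P_2f-f_{11}$, where $P_2f(x_1,x_2)=\frac1{|I_m^2|}\int_{I_m^2}f(x_1,x_2')\,dx_2'$ for $(x_1,x_2)\in I_k^1\times I_m^2$ and $f_{11}$ is as in \eqref{f11}, one shows that for \emph{every} rectangle $Q=Q_1\times Q_2$
$$
\frac1{|Q_1||Q_2|}\left|\int_{Q_2}\int_{Q_1}f_{01}(x_1,x_2)\,dx_1dx_2\right|\le\bar f(s_1,\tau_2;M)+\bar f(\tau_1,\tau_2;M).
$$
Indeed, integrating $P_2f$ over $Q$ produces $\sum_m\frac{|I_m^2\cap Q_2|}{|I_m^2|}\int_{Q_1\times I_m^2}f$, which is $\le s_1s_2\,\bar f(s_1,\tau_2;M)$ since $\sum_m|I_m^2\cap Q_2|=s_2$, and the $f_{11}$-term is bounded by $s_1s_2\,\bar f(\tau_1,\tau_2;M)$ exactly as in the proof of Lemma~\ref{lf00}.

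Now run through the regimes. If $s_1\le\tau_1$ and $s_2\le\tau_2$, the building block already yields a bound of the form $C\bar f(s_1,\tau_2;M)$. If $s_1>\tau_1$, first apply Lemma~\ref{l1} to $\varphi(x_1)=\int_{Q_2}f_{01}(x_1,x_2)\,dx_2$, which satisfies \eqref{fi} by Lemma~\ref{l3}, replacing $Q_1$ by two segments of length in $[\tau_1,2\tau_1]$ at the cost of the factor $\frac{2\tau_1}{s_1}$; this reduction must be carried out \emph{before} touching the second variable (otherwise the long piece below loses the decay in $s_1$). If now $s_2>\tau_2$, apply Lemma~\ref{l2} in the second variable to $\psi(x_2)=\int_{\tilde Q_1}f_{01}(x_1,x_2)\,dx_1$, where $\tilde Q_1$ is the (possibly already contracted) first segment: this produces two segments $Q_2',Q_2''$ of length in $[\tau_2,2\tau_2]$, to which the building block applies after dividing by $|Q_2|$ and gaining the factor $\frac{2\tau_2}{s_2}$, together with a segment $Q_2'''$ which is a union of whole cells $I_m^2$ with $s_2\le|Q_2'''|\le 3s_2$. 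On such a union $\int_{Q_2'''}P_2f\,dx_2=\int_{Q_2'''}f\,dx_2$, so $\int_{Q_2'''}f_{01}(x_1,x_2)\,dx_2=\int_{Q_2'''}\big(f-f_{11}\big)(x_1,x_2)\,dx_2$, whence
$$
\left|\int_{Q_2'''}\int_{\tilde Q_1}f_{01}\right|\le\left|\int_{\tilde Q_1\times Q_2'''}f\right|+\left|\int_{\tilde Q_1\times Q_2'''}f_{11}\right|\le 3|\tilde Q_1|s_2\,\bar f(|\tilde Q_1|,s_2;M)+3|\tilde Q_1|s_2\,\bar f(\tau_1,s_2;M),
$$
the $f_{11}$-term being estimated in the usual way, using that $|I_m^2\cap Q_2'''|\in\{0,\tau_2\}$. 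Collecting the contributions of $Q_2',Q_2'',Q_2'''$, and, when $s_1>\tau_1$, the two copies arising from the contraction of $Q_1$, gives in each regime a pointwise bound of exactly the shape of the corresponding line of \eqref{lf01_1}; passing to the supremum over $s_1\ge t_1$, $s_2\ge t_2$, using monotonicity of $\bar f$ and of $1/s_i$ and inspecting the possible positions of $(s_1,s_2)$ relative to $(\tau_1,\tau_2)$ — precisely as at the end of the proof of Lemma~\ref{lf00} — yields \eqref{lf01_1} with the stated constants.

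The only genuinely new ingredient, and the one I expect to require care, is the "long" piece $Q_2'''$: one must observe that on a union of whole cells $I_m^2$ the function $f_{01}$ integrates in $x_2$ to $f-f_{11}$, so that this term is controlled by net averages of $f$ at the scales $(\tau_1,s_2)$ and $(|\tilde Q_1|,s_2)$; this is what produces the terms $\bar f(\tau_1,t_2;M)$ and $\bar f(t_1,t_2;M)$ in \eqref{lf01_1}, while the "short" pieces $Q_2',Q_2''$ together with $f_{11}$ account for the terms carrying the extra factor $\frac{\tau_2}{t_2}$. Everything else is the bookkeeping of constants, which is routine, and the $f_{10}$ estimate is the mirror image with the roles of the two coordinates exchanged.
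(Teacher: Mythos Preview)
Your outline is correct and follows essentially the same path as the paper: the building block $f_{01}=P_2f-f_{11}$ giving $\bar f(s_1,\tau_2)+\bar f(\tau_1,\tau_2)$, contraction in $x_1$ via Lemma~\ref{l1} (using Lemma~\ref{l3}), the three-term Lemma~\ref{l2} in $x_2$, and the observation that on a union of whole cells $I_m^2$ one has $\int_{Q_2'''}f_{01}\,dx_2=\int_{Q_2'''}(f-f_{11})\,dx_2$ are exactly the steps the paper carries out, in the same order; the passage from the rectangle estimate to \eqref{lf01_1} is also the same supremum argument as at the end of Lemma~\ref{lf00}. One small remark: your parenthetical ``otherwise the long piece loses the decay in $s_1$'' is not quite the reason the order is chosen---since $\int_{I_k^1}f_{01}\,dx_1=0$ persists after integrating in $x_2$, one could in principle contract $Q_1$ afterwards---but the order you (and the paper) use is the natural one and your argument is sound.
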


\begin{proof}

Let $|Q_1|=s_1$, $|Q_2|=s_2$. Let us prove the inequality
\begin{equation}\label{lf01_2}
\frac{1}{|Q_1||Q_2|}\left|\int_{Q_2}\int_{Q_1}f_{01}(x_1,x_2)dx_1dx_2\right|\leq\left\lbrace
\begin{aligned}
    & 8\frac{\tau_1}{s_1}\left[3\bar{f}\left(\tau_1,s_2; M\right)+4\frac{\tau_2}{s_2}\bar{f}\left(\tau_1,\tau_2; M\right)\right],\;\;\;  s_1>\tau_1, s_2>\tau_2  \\
    & 8\frac{\tau_1}{s_1}\bar{f}\left(\tau_1,\tau_2; M\right),\;\;\;\;\;\;\;\;\;\; s_1>\tau_1, s_2\leq\tau_2  \\
    & 2\left[3\bar{f}\left(t_1,t_2; M\right)+4\frac{\tau_2}{t_2}\bar{f}(t_1,\tau_2; M)\right],\;\;\;\;\;\; s_1\leq\tau_1, s_2>\tau_2  \\
    & 2\bar{f}(s_1,\tau_2; M),\;\;\;\;\;\;\;\;\;\;\;\;\;\;\;\;\;  s_1\leq\tau_1, s_2\leq\tau_2  \\
\end{aligned}
\right..
\end{equation}

Consider the case when $|Q_1|=s_1\leq\tau_1$, $|Q_2|=s_2\leq\tau_2$. Let's use the relation \eqref{prf00}, where there are terms we need, and apply their corresponding estimates in \eqref{ocf00}:
\begin{equation}\label{star_seven}
\begin{split}
&\frac{1}{|Q_1||Q_2|}\left|\int_{Q_2}\int_{Q_1}f_{01}(x_1,x_2)dx_1dx_2\right|\leq\left|\frac{1}{|Q_1|}\sum_{|{I_m^2}\cap Q_2|>0}\frac{|{I_m^2}\cap Q_2|}{|I_m^2||Q_2|}\int_{Q_1}\int_{I_m^2}f(x_1,x_2)dx_2dx_1\right|+\\
&+\left|\sum_{|{I_k^1}\cap Q_1|>0}\sum_{|{I_m^2}\cap Q_2|>0}\frac{|{I_k^1}\cap Q_1||{I_m^2}\cap Q_2|}{|I_k^1||I_m^2||Q_1||Q_2|}\int_{I_k^1}\int_{I_m^2}f(x_1,x_2)dx_2dx_1\right|\leq\bar{f}(s_1,\tau_2; M)+\bar{f}(\tau_1,\tau_2; M).
\end{split}
\end{equation}

Then we get
$$
\frac{1}{|Q_1||Q_2|}\left|\int_{Q_2}\int_{Q_1}f_{01}(x_1,x_2)dx_1dx_2\right|\leq2\bar{f}(s_1,\tau_2; M).
$$

When $|Q_1|=s_1>\tau_1$, $|Q_2|=s_2\leq\tau_2$ according to the lemma \ref{l3}, we have 
$$
\int_{I_k^1}f_{01}(x_1,x_2)dx_1=0.
$$

Using lemma \ref{l1}, we obtain 
\begin{equation*}
\begin{split}
&\frac{1}{|Q_1||Q_2|}\left|\int_{Q_2}\int_{Q_1}f_{01}(x_1,x_2)dx_1dx_2\right|\leq\frac{1}{|Q_1||Q_2|}\left(\left|\int_{Q_2}\int_{Q_1'} f_{01}(x_1,x_2)dx_1dx_2\right|+\right.\\
&\left.+\left|\int_{Q_2}\int_{Q_1''} f_{01}(x_1,x_2)dx_1dx_2\right|\right)=\frac{|{Q_1}'|}{|Q_1|}\frac{1}{{|Q_1}'||Q_2|}\left|\int_{Q_2}\int_{Q_1'} f_{01}(x_1,x_2)dx_1dx_2\right|+\\
&+\frac{|Q_1''|}{|Q_1|}\frac{1}{|Q_1''||Q_2|}\left|\int_{Q_2}\int_{Q_1''} f_{01}(x_1,x_2)dx_1dx_2\right|.
\end{split}
\end{equation*}

Applying to each term the relation \eqref{star_seven}, we optain
\begin{equation*}
\frac{1}{|Q_1||Q_2|}\left|\int_{Q_2}\int_{Q_1}f_{01}(x_1,x_2)dx_1dx_2\right|\leq\frac{2\tau_1}{s_1}\left[2\bar{f}(|Q_1'|, \tau_2; M)+2\bar{f}(|Q_1''|, \tau_2; M)\right]\leq8\frac{\tau_1}{s_1}\bar{f}\left(\tau_1,\tau_2; M\right),
\end{equation*}
where $\tau_1\leq|Q_1'|\leq2\tau_1$, $\tau_1\leq|Q_1''|\leq2\tau_1$.

In the case, when $|Q_1|=s_1\leq\tau_1$, $|Q_2|=s_2>\tau_2$, we apply lemma \ref{l2}, then
\begin{equation*}
\begin{split}
&\frac{1}{|Q_1||Q_2|}\left|\int_{Q_2}\int_{Q_1}f_{01}(x_1,x_2)dx_1dx_2\right|\leq\frac{1}{|Q_1||Q_2|}\left(\left|\int_{Q_2'}\int_{Q_1} f_{01}(x_1,x_2)dx_1dx_2\right|+\right.\\
&\left.+\left|\int_{Q_2''}\int_{Q_1} f_{01}(x_1,x_2)dx_1dx_2\right|+\left|\int_{Q_2'''}\int_{Q_1} f_{01}(x_1,x_2)dx_1dx_2\right|\right).
\end{split}
\end{equation*}

Let's estimate the first two terms:
\begin{equation*}
\begin{split}
&\frac{1}{|Q_1||Q_2|}\left(\left|\int_{Q_2'}\int_{Q_1} f_{01}(x_1,x_2)dx_1dx_2\right|+\left|\int_{Q_2''}\int_{Q_1} f_{01}(x_1,x_2)dx_1dx_2\right|\right)\leq\\
&\leq4\frac{\tau_2}{s_2}\bar{f}(s_1,\tau_2; M)+4\frac{\tau_2}{s_2}\bar{f}(s_1,\tau_2; M)=8\frac{\tau_2}{s_2}\bar{f}(s_1,\tau_2; M).
\end{split}
\end{equation*}

Estimating the third term, we get
\begin{equation*}
\begin{split}
&\frac{1}{|Q_1||Q_2|}\left|\int_{Q_2'''}\int_{Q_1} f_{01}(x_1,x_2)dx_1dx_2\right|\leq\frac{1}{|Q_1||Q_2|}\left|\int_{Q_2'''}\int_{Q_1} f(x_1,x_2)dx_1dx_2\right|+\\
&+\left|\sum_{|{I_k^1}\cap Q_1|>0}\frac{|I_k^1\cap Q_1|}{|Q_1||Q_2||I_k^1|}\int_{Q_2'''}\int_{I_k^1} f(x_1,x_2)dx_1dx_2\right|\leq\\
&\leq3\bar{f}\left(s_1,s_2; M\right)+3\bar{f}\left(\tau_1, s_2; M\right)\leq6\bar{f}\left(s_1,s_2; M\right).
\end{split}
\end{equation*}

Summing up the results, we get:
\begin{equation}\label{f01_l5}
\frac{1}{|Q_1||Q_2|}\left|\int_{Q_2}\int_{Q_1} f_{01}(x_1,x_2)dx_1dx_2\right|\leq2\left[3\bar{f}\left(s_1,s_2; M\right)+4\frac{\tau_2}{s_2}\bar{f}(s_1,\tau_2; M)\right].
\end{equation}

In the case, when  $|Q_1|=s_1>\tau_1$, $|Q_2|=s_2>\tau_2$, we apply lemma \ref{l1}, then
\begin{equation*}
\begin{split}
&\frac{1}{|Q_1||Q_2|}\left|\int_{Q_2}\int_{Q_1}f_{01}(x_1,x_2)dx_1dx_2\right|\leq\frac{1}{|Q_1||Q_2|}\left(\left|\int_{Q_2}\int_{Q_1'} f_{01}(x_1,x_2)dx_1dx_2\right|+\right.\\
&\left.+\left|\int_{Q_2}\int_{Q_1''} f_{01}(x_1,x_2)dx_1dx_2\right|\right)=\frac{|{Q_1}'|}{|Q_1|}\frac{1}{|Q_1'||Q_2|}\left|\int_{Q_2}\int_{Q_1'} f_{01}(x_1,x_2)dx_1dx_2\right|+\\
&+\frac{|Q_1''|}{|Q_1|}\frac{1}{|Q_1''||Q_2|}\left|\int_{Q_2}\int_{Q_1''} f_{01}(x_1,x_2)dx_1dx_2\right|.
\end{split}
\end{equation*}

Applying the estimate \eqref{f01_l5} at $|Q_1|=s_1\leq\tau_1$, $|Q_2|=s_2>\tau_2$, we get
$$
\frac{1}{|Q_1||Q_2|}\left|\int_{Q_2}\int_{Q_1}f_{01}(x_1,x_2)dx_1dx_2\right|\leq8\frac{\tau_1}{s_1}\left[3\bar{f}\left(\tau_1, s_2; M\right)+4\frac{\tau_2}{s_2}\bar{f}\left(\tau_1,\tau_2; M\right)\right].
$$
$$
\bar{f}_{01}(t_1,t_2; M)\leq8\frac{\tau_1}{t_1}\left[3\bar{f}\left(\tau_1,t_2; M\right)+4\frac{\tau_2}{t_2}\bar{f}\left(\tau_1,\tau_2; M\right)\right].
$$

The proof of \eqref{lf01_1} follows from \eqref{lf01_2} just as the estimate \eqref{lf00_1} follows from \eqref{lf00_2} in lemma  \ref{lf00}.
Due to symmetry, the function estimate $\bar{f}_{10}$ is obtained similarly to the estimate for the function $\bar{f}_{01}$.
\end{proof}

\begin{lemma}\label{lf11}
Let $G_\tau$ is partitioning $\mathbb{R}^2$ into rectangles, $f(x_1,x_2)$  is locally integrable on $\mathbb{R}^2$ and function $f_{11}$ is defined by  \eqref{f11}. Then
$$
\bar{f}_{11}(t_1,t_2; M)\leq4\bar{f}(max(t_1,\tau_1), max(t_2,\tau_2)).
$$
\end{lemma}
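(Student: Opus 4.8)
The plan is to reduce the claim to a pointwise estimate on a single rectangle and then take a supremum. Since $\bar f(\cdot,\cdot;M)$ is non-increasing in each variable, it is enough to prove that for every $Q=Q_1\times Q_2\in M$, setting $s_i=|Q_i|$,
$$
\frac{1}{|Q_1||Q_2|}\left|\int_{Q_2}\int_{Q_1}f_{11}(x_1,x_2)\,dx_1\,dx_2\right|\le 4\,\bar f\big(\max(s_1,\tau_1),\max(s_2,\tau_2);M\big),
$$
since taking the supremum over $|Q_i|\ge t_i$ then yields the lemma (the scales $\max(s_i,\tau_i)$ are smallest when $s_i=t_i$, and $\bar f$ is non-increasing). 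Two structural facts will be used throughout: by \eqref{f11}, $f_{11}$ is constant on each cell $I_k^1\times I_m^2$ with value $c_{km}=\frac{1}{\tau_1\tau_2}\int_{I_k^1}\int_{I_m^2}f$, so $|c_{km}|\le\bar f(\tau_1,\tau_2;M)$; and for any finite unions of cells $R_1,R_2$ one has the exact identity $\int_{R_2}\int_{R_1}f_{11}=\int_{R_2}\int_{R_1}f$, because integrating a cell-average over a whole cell reproduces the integral of $f$.

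I would first settle the case $s_1\le\tau_1,\ s_2\le\tau_2$, where $Q_i$ meets only one or two consecutive cells in each variable and
$$
\frac{1}{|Q_1||Q_2|}\int_{Q_2}\int_{Q_1}f_{11}=\sum_{k,m}\frac{|I_k^1\cap Q_1|}{|Q_1|}\,\frac{|I_m^2\cap Q_2|}{|Q_2|}\,c_{km}
$$
is a convex combination of the $c_{km}$, hence $\le\bar f(\tau_1,\tau_2;M)$ in absolute value, which is the required bound. For the remaining cases ($s_1>\tau_1$ and/or $s_2>\tau_2$) I would use $f_{11}=f-f_{00}-f_{01}-f_{10}$ together with Lemmas \ref{lf00} and \ref{lf01}: the term $\frac{1}{s_1s_2}\big|\int_{Q_2}\int_{Q_1}f\big|\le\bar f(s_1,s_2;M)$ is harmless, while the contributions of $f_{00},f_{01},f_{10}$ are dominated by $\bar f_{00},\bar f_{01},\bar f_{10}$ at the point $(s_1,s_2)$, for which those lemmas give bounds carrying the decay factors $\tau_i/s_i$. (Alternatively one may ``round'' $Q_i$ up to the union $R_i$ of the cells it meets, use $\int_{R_2}\int_{R_1}f_{11}=\int_{R_2}\int_{R_1}f$, and estimate the two collar terms $\int_{R_2}\int_{R_1\setminus Q_1}f_{11}$ and $\int_{R_2\setminus Q_2}\int_{Q_1}f_{11}$, each of which lives in at most two boundary cells in one variable, where $f_{11}$ is a constant equal to a cell-restricted average of $f$.)

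The delicate point --- which I expect to be the main obstacle --- is that in the cases $s_i>\tau_i$ both routes naturally produce terms of the form $\frac{\tau_i}{s_i}\,\bar f(\tau_i,\,\cdot\,;M)$, i.e.\ $\bar f$ at the grid scale $\tau_i$ rather than at the target scale $\max(s_i,\tau_i)=s_i$; since $\bar f$ is non-increasing, this has to be upgraded rather than simply discarded. I would do so by an elementary enlargement argument, carried out separately in each variable: given a segment of length $r_1\in[\tau_1,s_1]$, extend it to a segment of length $\ge s_1$ by adjoining a piece on the left, on the right, or on both sides; if $A,B,C$ denote the three resulting double integrals (each over a rectangle whose first side has length $\ge s_1$), then the original integral equals $A+B-C$, which yields $\frac{\tau_1}{s_1}\bar f(\tau_1,\,\cdot\,;M)\le c\,\bar f(s_1,\,\cdot\,;M)$, and likewise in $x_2$. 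Substituting these back converts every estimate above into a bound by $\bar f\big(\max(s_1,\tau_1),\max(s_2,\tau_2);M\big)$; a careful accounting of the constants through the four cases gives the factor $4$ (and, if one allows an inessential larger absolute constant in place of $4$, the argument above already suffices).
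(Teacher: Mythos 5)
Your reduction to a pointwise estimate over a fixed rectangle, the structural facts about $f_{11}$ (piecewise constant with cell values $c_{km}$, $|c_{km}|\le\bar f(\tau_1,\tau_2;M)$, reproduction of $\int f$ over unions of cells), and the case $s_1\le\tau_1$, $s_2\le\tau_2$ via a convex combination are all correct. The genuine gap is in the mixed cases, say $s_1>\tau_1$, $s_2\le\tau_2$, where the required bound is $C\,\bar f(s_1,\tau_2;M)$ --- the second argument must sit at the grid scale $\tau_2$, not at $s_2$. The splitting $f_{11}=f-f_{00}-f_{01}-f_{10}$ plus the triangle inequality destroys exactly the cancellation that makes this possible: the term you call harmless already violates the target, since for $f$ concentrated on a set of $x_2$-diameter $\ll\tau_2$ one has $\frac{1}{s_1s_2}\bigl|\int_{Q_2}\int_{Q_1}f\bigr|\approx\frac{1}{s_1s_2}$ while $\bar f(s_1,\tau_2;M)\approx\frac{1}{s_1\tau_2}$, an unbounded discrepancy as $s_2\to0$; likewise the bounds of Lemmas \ref{lf00} and \ref{lf01} evaluated at $(s_1,s_2)$ contain $\bar f(\tau_1,s_2;M)$ and $\bar f(s_1,s_2;M)$, i.e.\ second argument $s_2$. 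Your enlargement inequality $\frac{r}{R}\bar f(r,\cdot\,;M)\le c\,\bar f(R,\cdot\,;M)$ for $r\le R$ is true (and proved correctly by your $A+B-C$ trick), but it cannot repair this: it requires the compensating factor $r/R$, and here there is none --- you would need $\bar f(\cdot,s_2;M)\le c\,\bar f(\cdot,\tau_2;M)$, which is false. The parenthetical alternative has the same defect if you round $Q_2$ up to a union of cells, because the main term then carries the factor $|R_2|/s_2\sim\tau_2/s_2$.

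The paper avoids all of this (and all four cases at once) by staying with the piecewise-constant representation: $\frac{1}{s_1s_2}\int_{Q_2}\int_{Q_1}f_{11}=\sum_{k,m}\frac{|I_k^1\cap Q_1|\,|I_m^2\cap Q_2|}{\tau_1\tau_2\,s_1s_2}\int_{I_k^1}\int_{I_m^2}f$, together with the elementary observation $\frac{|I_k^1\cap Q_1|}{\tau_1 s_1}\le\frac{1}{\max(s_1,\tau_1)}$ (and similarly in $x_2$), and the fact that the cells meeting $Q_i$ fill a segment $\tilde Q_i$ with $\max(s_i,\tau_i)\le|\tilde Q_i|\le2\max(s_i,\tau_i)$; this yields $4\bar f(\max(s_1,\tau_1),\max(s_2,\tau_2);M)$ directly, with no appeal to Lemmas \ref{lf00}--\ref{lf01} and no enlargement step. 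The natural repair of your plan is in the same spirit: in the variable with $s_i\le\tau_i$ keep your convex-combination argument, and enlarge or round up only in the variable with $s_i>\tau_i$; that closes the mixed cases, at the cost of an absolute constant larger than $4$ (which is all the main theorem uses, so your inability to reach exactly $4$ is a minor point). As written, however, the mixed cases in your proposal do not go through.
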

\begin{proof}
\begin{equation*}
\begin{split}
&I=\frac{1}{|Q_1||Q_2|}\left|\int_{Q_2}\int_{Q_1} f_{11}(x_1,x_2)dx_1dx_2\right|=\\
&=\left|\sum_{I_k^1\cap{Q_1}\neq\oslash}\sum_{I_m^2\cap{Q_2}\neq\oslash}\frac{|I_m^2\cap{Q_2}||I_k^1\cap{Q_1}|}{|I_k^1||I_m^2||Q_1||Q_2|}\int_{I_m^2}\int_{I_k^1}f(x_1',x_2')dx_1'dx_2'\right|.
\end{split}
\end{equation*}

As $|I_m^2\cap{Q_2}|\leq\min(s_2,\tau_2)$ and $|I_k^1\cap{Q_1}|\leq\min(s_1,\tau_1)$, then 
$$
\frac{|I_m^2\cap{Q_2}||I_k^1\cap{Q_1}|}{\tau_1\cdot\tau_2\cdot s_1\cdot s_2}\leq\frac{1}{\max(\tau_1,s_1)\max(\tau_2,s_2)}.
$$

Then
\begin{equation*}
\begin{split}
&I=\frac{1}{|Q_1||Q_2|}\left|\int_{Q_2}\int_{Q_1} f_{11}(x_1,x_2)dx_1dx_2\right|\leq\\
&\leq \frac{1}{\max(\tau_1,s_1)\max(\tau_2,s_2)}\left|\sum_{I_k^1\cap{Q_1}\neq\oslash}\sum_{I_m^2\cap{Q_2}\neq\oslash}\int_{I_m^2}\int_{I_k^1} f(x_1',x_2')dx_1'dx_2'\right|=\\
&=\frac{1}{\max(\tau_1,s_1)\max(\tau_2,s_2)}\left|\int_{\tilde{Q_2}}\int_{\tilde{Q_1}} f(x_1',x_2')dx_1'dx_2'\right|,  
\end{split}
\end{equation*}
here $\tilde{Q_1}=\cup_{I_k^1\cap{Q_1}\neq\oslash}I_k^1$, $\tilde{Q_2}=\cup_{I_m^2\cap{Q_2}\neq\oslash}I_m^2$ - segments, and such that $\max(s_i,\tau_i)\leq|\tilde{Q_i}|\leq2\max(s_i,\tau_i)$.
Consequently
$$
|I|\leq\frac{|\tilde{Q_1}||\tilde{Q_2}|}{\max(\tau_1,s_1)\max(\tau_2,s_2)}\bar{f}(\max(\tau_1,s_1),\max(\tau_2,s_2))\leq4\bar{f}(\max(\tau_1,s_1),\max(\tau_2,s_2)).
$$

Further,
\begin{equation*}
\begin{split}
&\bar{f}_{11}(t_1,t_2; M)=\sup_{|Q_i|\geq t_i}\frac{1}{|Q_1||Q_2|}\left|\int_{Q_2}\int_{Q_1} f_{11}(x_1,x_2)dx_1dx_2\right|\leq\\
&\leq4\bar{f}(\max(\tau_1,|Q_1|),\max(\tau_2,|Q_2|))\leq4\bar{f}(\max(\tau_1,t_1)\max(\tau_2,t_2)).
\end{split}
\end{equation*}
\end{proof}

We will use the classic Hardy inequalities. Let us formulate them in the form of a lemma.

\begin{lemma}[{Hardy's inequality}]\label{Hardy}
Let $1\leq q<\infty$, $\alpha>0$, then the inequalities hold
$$
\left(\int_0^\infty\left(t^\alpha\int_t^\infty \varphi(s)ds\right)^q\frac{dt}{t}\right)^\frac{1}{q}\leq\alpha^{-1}\left(\int_0^\infty\left(t^{1+\alpha}\varphi(t)\right)^q\frac{dt}{t}\right)^\frac{1}{q},
$$
$$
\left(\int_0^\infty\left(t^{-\alpha}\int_0^t \varphi(s)ds\right)^q\frac{dt}{t}\right)^\frac{1}{q}\leq\alpha^{-1}\left(\int_0^\infty\left(t^{1-\alpha}\varphi(t)\right)^q\frac{dt}{t}\right)^\frac{1}{q}.
$$
\end{lemma}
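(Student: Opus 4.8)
The plan is to deduce both inequalities from a single scaling trick: after the substitution $s=tu$ the inner integral becomes an average of dilates of $\varphi$ against a fixed, integrable weight in the variable $u$, and then one pulls the outer $L^q\!\bigl(\tfrac{dt}{t}\bigr)$-norm inside the $u$-integral by Minkowski's integral inequality. Since for non-integer $q$ the right-hand sides are only meaningful when $\varphi\ge 0$, and since $\bigl|\int\varphi\bigr|\le\int|\varphi|$, there is no loss in assuming $\varphi\ge0$ from the start.

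For the first inequality, put $s=tu$ (so $ds=t\,du$), so that $\int_t^\infty\varphi(s)\,ds=\int_1^\infty t\,\varphi(tu)\,du$ and hence
$$
t^{\alpha}\int_t^\infty\varphi(s)\,ds=\int_1^\infty t^{1+\alpha}\varphi(tu)\,du .
$$
Applying Minkowski's integral inequality in $L^q\!\bigl(\tfrac{dt}{t}\bigr)$ gives
$$
\left(\int_0^\infty\!\Bigl(t^{\alpha}\!\int_t^\infty\!\varphi(s)\,ds\Bigr)^{q}\frac{dt}{t}\right)^{1/q}\le\int_1^\infty\left(\int_0^\infty\bigl(t^{1+\alpha}\varphi(tu)\bigr)^{q}\frac{dt}{t}\right)^{1/q}du .
$$
In the inner norm substitute $r=tu$ (so $\tfrac{dr}{r}=\tfrac{dt}{t}$ and $t=r/u$); this pulls out a factor $u^{-(1+\alpha)}$ and leaves precisely $\bigl(\int_0^\infty(r^{1+\alpha}\varphi(r))^{q}\tfrac{dr}{r}\bigr)^{1/q}$. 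The remaining $u$-integral is $\int_1^\infty u^{-1-\alpha}\,du=\alpha^{-1}$, which converges exactly because $\alpha>0$; this is the asserted bound.

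The second inequality is handled in the same way with $u$ ranging over $(0,1)$: the substitution $s=tu$ gives $t^{-\alpha}\int_0^t\varphi(s)\,ds=\int_0^1 t^{1-\alpha}\varphi(tu)\,du$, Minkowski's inequality again moves the $L^q\!\bigl(\tfrac{dt}{t}\bigr)$-norm inside, the change of variables $r=tu$ now produces the factor $u^{-(1-\alpha)}=u^{\alpha-1}$, and $\int_0^1 u^{\alpha-1}\,du=\alpha^{-1}$, once more because $\alpha>0$.

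There is no genuine obstacle in this argument; the only points deserving a word are the reduction to $\varphi\ge 0$, the validity of Minkowski's integral inequality for $1\le q<\infty$ (with the case $q=1$ being merely Fubini's theorem), and the convergence of the weight integrals $\int_1^\infty u^{-1-\alpha}\,du$ and $\int_0^1 u^{\alpha-1}\,du$ — both finite precisely under the hypothesis $\alpha>0$, and both divergent at $\alpha=0$, which is exactly why that endpoint is excluded. If one prefers, the same estimates follow by applying Hölder's inequality to $\int_t^\infty\varphi(s)\,s^{\beta}s^{-\beta}\,ds$ with a suitably chosen exponent $\beta$, but the Minkowski route avoids picking $\beta$ and keeps the constant transparent.
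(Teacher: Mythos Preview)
Your proof is correct: the substitution $s=tu$ followed by Minkowski's integral inequality and the dilation invariance of $\frac{dt}{t}$ is a standard and clean route to both Hardy inequalities, and your computation of the constants $\int_1^\infty u^{-1-\alpha}\,du=\int_0^1 u^{\alpha-1}\,du=\alpha^{-1}$ is accurate.

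There is nothing to compare against here: the paper does not supply its own proof of this lemma. It is stated as a classical result (``We will use the classic Hardy inequalities. Let us formulate them in the form of a lemma.'') and used without argument in the proof of the main theorem. Your write-up therefore fills a gap the authors deliberately left, and the Minkowski approach you chose is one of the standard textbook proofs for this form of Hardy's inequality.
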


\section{ The main result}

Consider the interpolation method for anisotropic spaces proposed by E.D. Nursultanov \cite{NED6}. This method is based on ideas from the work of G. Sparr \cite{Sparr}, D.L. Fernandez \cite{Fernandez1}-\cite{Fernandez3} and others  \cite{Cobus}, \cite{Krepkogorski1}, \cite{Krepkogorski2}.

Let ${\bf A_0} = (A_1^{0},A_2^{0}),\; {\bf A_1} = (A_1^1,A_2^1) $ two anisotropic spaces, $ E =
\{\varepsilon=(\varepsilon_1,\varepsilon_2):\varepsilon_i=0,$ or $\varepsilon_i=1,\;\;i=1,2 \}.$ For arbitrary $\varepsilon\in E$ define space ${\bf{A}_\varepsilon} = (A_1^{\varepsilon_1},A_2^{\varepsilon_2})$ with the norm
$$
\|a\|_{\bf A_\varepsilon} = \|\|a\|_{A_1^{\varepsilon_1}}\|_{A_2^{\varepsilon_2}}.
$$

Let $0<\bar\theta=(\theta_1,\theta_2)< 1$, $0<\bar{q}=(q_1,q_2)\leq\infty$.
By ${\bf A}_{\bar{\theta},\bar{q}} = ({\bf A_0}, {\bf A_1})_{\bar{\theta},\bar{q}}$ 
denote the linear subset $\sum_{\varepsilon\in E}{\bf {A}}_\varepsilon$, for whose elements it is true:
$$
\|a\|_{{\bf A}_{\bar{\theta},\bar{q}}} =\left(\int^{\infty}_{0}\left(\int^{\infty}_{0}\left(t_1^{-\theta_1}t_2^{-\theta_2}K(t_1,t_2)\right)^{q_1}\frac{dt_1}{t_1}\right)^\frac{q_2}{q_1}\frac{dt_2}{t_2}\right)^\frac{1}{q_2}<\infty,
$$
where 
$$
K(t,a;{\bf A_0},{\bf A_1}) = \inf\{\sum_{\varepsilon\in E} t^{\varepsilon}
\|a_{\varepsilon}\|_{\bf A_\varepsilon} \;:\; a=\sum_{\varepsilon\in E}
a_\varepsilon,\; a_\varepsilon\in {\bf A}_\varepsilon\},
$$
where $ t^{\varepsilon} = t_1^{\varepsilon_1} t_2^{\varepsilon_2}$.

\begin{theorem}
Let $M$ is the set of all rectangles in $\mathbb{R}^2$,  $1<\bar{p}_0<\bar{p}_1<\infty$, $1\leq\bar{q}_0,\bar{q},\bar{q}_1\leq\infty$, $0<\bar{\theta}=(\theta_1,\theta_2)<1$ then
\begin{equation*}
(N_{\bar{p}_0,\bar{q}_0}(M), N_{\bar{p}_1,\bar{q}_1}(M))_{\bar{\theta},\bar{q}}=N_{\bar{p},\bar{q}}(M),
\end{equation*}
where $\frac{1}{\bar{p}}=\frac{1-\bar{\theta}}{\bar{p}_0}+\frac{\bar{\theta}}{\bar{p}_1}$.
\end{theorem}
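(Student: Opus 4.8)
The plan is to prove the two continuous embeddings $N_{\bar p,\bar q}(M)\hookrightarrow(\mathbf{A_0},\mathbf{A_1})_{\bar\theta,\bar q}$ and $(\mathbf{A_0},\mathbf{A_1})_{\bar\theta,\bar q}\hookrightarrow N_{\bar p,\bar q}(M)$ separately. Throughout put $\beta_i=\tfrac1{p_i^0}-\tfrac1{p_i^1}$, which is strictly positive since $\bar p_0<\bar p_1$; the relation $\tfrac1{\bar p}=\tfrac{1-\bar\theta}{\bar p_0}+\tfrac{\bar\theta}{\bar p_1}$ is equivalent to the identities $\tfrac1{p_i}-\tfrac1{p_i^0}=-\theta_i\beta_i$ and $\tfrac1{p_i}-\tfrac1{p_i^1}=(1-\theta_i)\beta_i$, and this is precisely how the hypothesis on $\bar p$ will enter.

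\emph{The embedding $(\mathbf{A_0},\mathbf{A_1})_{\bar\theta,\bar q}\hookrightarrow N_{\bar p,\bar q}(M)$ (the softer half).} Since $\bar g(t_1,t_2;M)$ is non-increasing in each variable, integrating $\big(s_1^{1/p_1^{\varepsilon_1}}s_2^{1/p_2^{\varepsilon_2}}\bar g(s_1,s_2;M)\big)^{q_i^{\varepsilon_i}}$ over $s_i\le t_i$ gives the elementary bound $\bar g(t_1,t_2;M)\lesssim t_1^{-1/p_1^{\varepsilon_1}}t_2^{-1/p_2^{\varepsilon_2}}\|g\|_{\mathbf{A}_\varepsilon}$ (trivial when $q_i^{\varepsilon_i}=\infty$). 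Applying this to each component of an arbitrary admissible decomposition $f=\sum_{\varepsilon\in E}f_\varepsilon$, using the subadditivity $\bar f(t_1,t_2;M)\le\sum_\varepsilon\bar f_\varepsilon(t_1,t_2;M)$, and taking the infimum over decompositions yields
$$
\bar f(t_1,t_2;M)\lesssim t_1^{-1/p_1^0}t_2^{-1/p_2^0}\,K\big(t_1^{\beta_1},t_2^{\beta_2},f\big).
$$
Inserting this into the definition of $\|f\|_{N_{\bar p,\bar q}(M)}$ and substituting $u_i=t_i^{\beta_i}$ turns the factor $t_i^{1/p_i-1/p_i^0}$ into $u_i^{-\theta_i}$, which produces exactly $\|f\|_{N_{\bar p,\bar q}(M)}\lesssim\|f\|_{(\mathbf{A_0},\mathbf{A_1})_{\bar\theta,\bar q}}$.

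\emph{The embedding $N_{\bar p,\bar q}(M)\hookrightarrow(\mathbf{A_0},\mathbf{A_1})_{\bar\theta,\bar q}$ (the main part).} Fix $(t_1,t_2)$, set $\tau_i=t_i^{1/\beta_i}$, and take the decomposition $f=f_{00}+f_{10}+f_{01}+f_{11}$ relative to $G_{\bar\tau}$. Assign $f_{00}$ to the slot $\varepsilon=(0,0)$; the piece $f_{01}$, which has mean zero in $x_1$ and hence gains a factor $\tau_1/s_1$ in the first variable (Lemma~\ref{lf01}), matching the more demanding space $A_1^1$, to $\varepsilon=(1,0)$; symmetrically $f_{10}$ to $\varepsilon=(0,1)$; and $f_{11}$ to $\varepsilon=(1,1)$. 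Using $\bar f(s_1,s_2;M)\lesssim s_1^{-1/p_1}s_2^{-1/p_2}\|f\|_{N_{\bar p,\bar q}(M)}$ together with Lemmas~\ref{lf00}--\ref{lf11}, one checks these pieces indeed lie in the respective $\mathbf{A}_\varepsilon$, so in particular $N_{\bar p,\bar q}(M)\subseteq\sum_\varepsilon\mathbf{A}_\varepsilon$, and
$$
K(t_1,t_2,f)\le\|f_{00}\|_{\mathbf{A}_{(0,0)}}+t_1\|f_{01}\|_{\mathbf{A}_{(1,0)}}+t_2\|f_{10}\|_{\mathbf{A}_{(0,1)}}+t_1t_2\|f_{11}\|_{\mathbf{A}_{(1,1)}}.
$$
Each of the four component norms is then bounded by Lemmas~\ref{lf00}--\ref{lf11}: along the four regimes in those lemmas it splits into a finite sum of integrals of $\bar f(\cdot,\cdot;M)$ against powers of the variables, where the decay factors $\tau_i/s_i$ make the parts over $s_i>\tau_i$ summable and the truncations $\max(s_i,\tau_i)$ in Lemma~\ref{lf11} freeze $\bar f$ at scale $\tau_i$ on $s_i<\tau_i$.

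\emph{Conclusion of the main part and the chief obstacle.} It remains to substitute these bounds into $\big\|t_1^{-\theta_1}t_2^{-\theta_2}K(t_1,t_2,f)\big\|_{L^{\bar q}(dt/t)}$, treat the four terms by the triangle inequality in $L^{\bar q}$, pass back to the variables $\tau_i=t_i^{1/\beta_i}$, and integrate out the auxiliary scales coming from the component norms. Here Hardy's inequality (Lemma~\ref{Hardy}, which requires $q_i\ge1$, applied in its generalized form so as to reconcile the inner exponents $\bar q_0,\bar q_1$ with the outer $\bar q$) is the engine: in a direction with $\varepsilon_i=1$ one meets an inner integral $\int_{\tau_i}^{\infty}$ multiplied by a positive power of $\tau_i$ outside and uses the first inequality of Lemma~\ref{Hardy} with exponent $(1-\theta_i)\beta_i>0$; in a direction with $\varepsilon_i=0$ one meets $\int_0^{\tau_i}$ and uses the second inequality with exponent $\theta_i\beta_i>0$; the mixed pieces $f_{01},f_{10}$ require one of each, one per variable. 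After these reductions every term collapses to a constant multiple of $\|f\|_{N_{\bar p,\bar q}(M)}$. I expect the bookkeeping of this last stage — selecting, for each of the four pieces, the correct combination of the two Hardy inequalities, in the correct order relative to the mixed ($L^{q_1}$ then $L^{q_2}$) structure, while simultaneously changing variables and absorbing the four-regime splittings of Lemmas~\ref{lf00}--\ref{lf11} — to be the real difficulty; the one-dimensional scalar result \cite{Nursultanov2} is the template, but every estimate must now be carried out in each variable with the anisotropic exponents.
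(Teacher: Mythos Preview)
Your overall architecture matches the paper's, but the assignment of the two cross pieces is inverted, and the reasoning you give for it is backwards. You send $f_{01}$ (mean zero in $x_1$, piecewise constant in $x_2$) to the slot $\varepsilon=(1,0)$, arguing that the gain $\tau_1/s_1$ ``matches the more demanding space $A_1^1$''. But that gain appears only for $s_1>\tau_1$ and merely makes the tail $\int_{\tau_1}^\infty s_1^{1/p_1^{\varepsilon_1}-2}\,ds_1$ converge for \emph{either} choice of $\varepsilon_1$; it does not select the slot. What selects the slot is the behaviour for $s_1\le\tau_1$: there Lemma~\ref{lf01} gives only $\bar f_{01}(s_1,\cdot)\lesssim\bar f(s_1,\cdot)$ with no freezing, so $\|f_{01}\|_{\mathbf A_\varepsilon}$ unavoidably contains a term $\int_0^{\tau_1}s_1^{1/p_1^{\varepsilon_1}-1}\bar f(s_1,\cdot)\,ds_1$. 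After the substitution $t_i=\tau_i^{\beta_i}$ this carries the outside factor $\tau_1^{(\varepsilon_1-\theta_1)\beta_1}$, and for the second Hardy inequality of Lemma~\ref{Hardy} to apply to an inner $\int_0^{\tau_1}$ that exponent must be negative, i.e.\ $\varepsilon_1=0$. With your choice $\varepsilon_1=1$ the exponent is $(1-\theta_1)\beta_1>0$ and the $L^{q_1}(d\tau_1/\tau_1)$ norm of that term is in general infinite. Symmetrically, the piecewise constancy of $f_{01}$ in $x_2$ freezes $\bar f_{01}$ at scale $\tau_2$ for small $s_2$ but leaves $\bar f(\cdot,s_2)$ for $s_2>\tau_2$, producing $\int_{\tau_2}^\infty$ and forcing $\varepsilon_2=1$. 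Thus $f_{01}$ must go to $(0,1)$ and $f_{10}$ to $(1,0)$, exactly as in the paper; your own later description of which Hardy inequality fires in each direction is in fact consistent with the paper's assignment, not with yours.

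A second, smaller gap: the paper does not try to ``reconcile the inner exponents $\bar q_0,\bar q_1$ with the outer $\bar q$'' via some generalized Hardy inequality. It instead bounds the $K$-functional through the corner spaces $N_{\bar p_\varepsilon,(1,1)}$, so that each $\|f_\varepsilon\|$ is a genuine iterated integral with inner exponent $1$, to which Lemma~\ref{Hardy} applies directly with the outer exponents $q_1,q_2$; the passage to the stated endpoints $N_{\bar p_\varepsilon,\bar q_\varepsilon}$ then comes for free from $N_{\bar p_\varepsilon,(1,1)}\hookrightarrow N_{\bar p_\varepsilon,\bar q_\varepsilon}$. Without this device your Hardy step, as written, is not actually specified.
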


\begin{remark}
Condition $1\leq\bar{q}_0,\bar{q},\bar{q}_1\leq\infty$ can be replaced by the condition $0<\bar{q}_0,\bar{q},\bar{q}_1\leq\infty$, the statement will remain true. This statement of the theorem is presented to shorten and simplify the proof.
\end{remark}

\begin{proof}
Let's prove the embedding
$$
N_{\bar{p},\bar{q}}(M) \hookrightarrow\left(N_{\bar{p}_0,\bar{q}_0}(M), N_{\bar{p}_1,\bar{q}_1}(M)\right)_{\bar{\theta},\bar{q}}.
$$

Let $f\in N_{\bar{p},\bar{q}}(M)$, $f_{00}$, $f_{01}$, $f_{10}$, $f_{11}$ defined by formulas \eqref{f01}-\eqref{f00}. Then, using lemmas  \ref{lf00}, \ref{lf01}, \ref{lf11}, we get
\begin{equation*}
\begin{split}
&\left\|f_{00}\right\|_{N_{(p_1^0,p_2^0),(1,1)}}= \int^{\infty}_{0}\int^{\infty}_{0}t_1^{\frac{1}{p_1^0}-1}t_2^{\frac{1}{p_2^0}-1}\bar{f}_{00}(t_1,t_2)dt_1dt_2\leq C_{00}\left(\int^{\tau_2}_{0}\int^{\tau_1}_{0}t_1^{\frac{1}{p_1^0}-1}t_2^{\frac{1}{p_2^0}-1}\bar{f}(t_1,t_2)dt_1dt_2+\right.\\
& \left.+\tau_1^{\frac{1}{p_1^0}}\int^{\tau_2}_{0}t_2^{\frac{1}{p_2^0}-1}\bar{f}\left(\tau_1,t_2)\right)dt_2+\tau_2^{\frac{1}{p_2^0}}\int^{\tau_1}_{0}t_1^{\frac{1}{p_1^0}-1}\bar{f}\left(t_1,\tau_2\right)dt_1+\tau_1^{\frac{1}{p_1^0}}\tau_2^{\frac{1}{p_2^0}}\bar{f}\left(\tau_1,\tau_2\right)\right);
\end{split}
\end{equation*}

\begin{equation*}
\begin{split}
&\left\|f_{01}\right\|_{N_{(p_1^0,p_2^1),(1,1)}}\leq C_{01}\left(\int^{\infty}_{\tau_2}\int^{\tau_1}_{0}t_1^{\frac{1}{p_1^0}-1}t_2^{\frac{1}{p_2^1}-1}\bar{f}(t_1,t_2)dt_1dt_2+\right.\\
&\left. +\tau_2^\frac{1}{p_2^1}\int^{\tau_1}_{0}t_1^{\frac{1}{p_1^0}-1}\bar{f}(t_1,\tau_2)dt_1+\tau_1^\frac{1}{p_1^0}\int^{\infty}_{\tau_2}t_2^{\frac{1}{p_2^1}-1}\bar{f}\left(\tau_1,t_2\right)dt_2+\tau_1^\frac{1}{p_1^0}\tau_2^\frac{1}{p_2^1}\bar{f}\left(\tau_1,\tau_2\right)\right);
\end{split}
\end{equation*}

\begin{equation*}
\begin{split}
&\left\|f_{10}\right\|_{N_{(p_1^1,p_2^0),(1,1)}}\leq C_{10}\left(\int^{\tau_2}_{0}\int^{\infty}_{\tau_1}t_1^{\frac{1}{p_1^1}-1}t_2^{\frac{1}{p_2^0}-1}\bar{f}(t_1,t_2)dt_1dt_2+\right.\\
&\left.+\tau_1^\frac{1}{p_1^1}\int^{\tau_2}_{0}t_2^{\frac{1}{p_2^0}-1}\bar{f}(\tau_1,t_2)dt_2+\tau_2^\frac{1}{p_2^0}\int^{\infty}_{\tau_1}t_1^{\frac{1}{p_1^1}-1}\bar{f}\left(t_1,\tau_2\right)dt_1+\tau_1^\frac{1}{p_1^1}\tau_2^\frac{1}{p_2^0}\bar{f}\left(\tau_1,\tau_2\right)\right);
\end{split}
\end{equation*}

\begin{equation*}
\begin{split}
&\left\|f_{11}\right\|_{N_{(p_1^1,p_2^1),(1,1)}}\leq C_{11}\left(\tau_1^\frac{1}{p_1^1}\tau_2^\frac{1}{p_2^1}\bar{f}(\tau_1,\tau_2)+\tau_2^\frac{1}{p_2^1}\int^{\infty}_{\tau_1} t_1^{\frac{1}{p_1^1}-1}\bar{f}(t_1,\tau_2)dt_1+\right.\\
&\left.+\int^{\infty}_{\tau_2}t_2^{\frac{1}{p_2^1}-1}\bar{f}(\tau_1,t_2)dt_2+\int^{\infty}_{\tau_2}\int^{\infty}_{\tau_1}t_1^{\frac{1}{p_1^1}-1}t_2^{\frac{1}{p_2^1}-1}\bar{f}(t_1,t_2)dt_1dt_2\right).
\end{split}
\end{equation*}
Then
\begin{equation*}
\begin{split}
&K(t_1,t_2, f)= K\left(t_1,t_2,f; N_{(p_1^0,p_2^0),(1,1)}, N_{(p_1^1,p_2^1),(1,1)}\right)\leq\\
& \leq\|f_{00}\|_{N_{(p_1^0,p_2^0),(1,1)}}+t_1\|f_{10}\|_{N_{(p_1^1,p_2^0),(1,1)}}+t_2\|f_{01}\|_{N_{(p_1^0,p_2^1),(1,1)}}+t_1t_2\|f_{11}\|_{N_{(p_1^1,p_2^1),(1,1)}}=\\
&=I_{00}+I_{01}+I_{10}+I_{11}.
\end{split}
\end{equation*}
$$
F(K)=\left(\int^{\infty}_{0}\left(\int^{\infty}_{0}\left(t_1^{-\theta_1}t_2^{-\theta_2}K(t_1,t_2)\right)^{q_1}\frac{dt_1}{t_1}\right)^\frac{q_2}{q_1}\frac{dt_2}{t_2}\right)^\frac{1}{q_2}\leq F(I_{00})+F(I_{01})+F(I_{10})+F(I_{11}).
$$

Next, we make a replacement
$$\tau_1=t_1^{\frac{1}{\frac{1}{p_1^0}-\frac{1}{p_1^1}}};\;\;\;\ \tau_2=t_2^{\frac{1}{\frac{1}{p_2^0}-\frac{1}{p_2^1}}}$$
and applying Hardy's inequality (see lemma \ref{Hardy}), we obtain 
\begin{equation*}
\begin{split}
F(K)=\left(\int^{\infty}_{0}\left(\int^{\infty}_{0}\left(t_1^{-\theta_1}t_2^{-\theta_2}K(t_1,t_2)\right)^{q_1}\frac{dt_1}{t_1}\right)^\frac{q_2}{q_1}\frac{dt_2}{t_2}\right)^\frac{1}{q_2}\leq C\left\|f\right\|_{N_{\bar{p},\bar{q}}(M)}.
\end{split}
\end{equation*}

Reverse nesting $(N_{\bar{p}_0,\bar{q}_0}(M), N_{\bar{p}_1, \bar{q}_1}(M))_{\bar{\theta}, \bar{q}}\hookrightarrow N_{\bar{p}, \bar{q}}(M)$ was proven in paper \cite{NED6} (see Theorem 1). 

\end{proof}

\renewcommand{\refname}{References}

\end{document}